\numberwithin{equation}{section}
\newtheorem{thm}{Theorem}[section]
\newtheorem{lemma}[thm]{Lemma}
\newtheorem{remark}[thm]{Remark}
\newtheorem{definition}[thm]{Definition}
\renewcommand{\(}{\left(}
\renewcommand{\)}{\right)}
\newcommand{\E}{{\rm E}}
\newcommand{\tr}{{\rm tr}}
\newcommand{\mb}{\mathbf}
\newcommand{\du}{\circ}
\newcommand{\di}{{\rm Diag}}
\begin{document}

\makeatletter
\renewcommand{\email}[2][]{%
  \ifx\emails\@empty\relax\else{\g@addto@macro\emails{,\space}}\fi%
  \@ifnotempty{#1}{\g@addto@macro\emails{\textrm{(#1)}\space}}%
  \g@addto@macro\emails{#2}%
}
\makeatother

\title[Homoscedasticity test for both low and high-dimensional regressions]{Homoscedasticity tests for both low and high-dimensional fixed design regressions}

\author{Zhidong Bai}
\address{KLASMOE and School of Mathematics and Statistics, Northeast Normal University, Changchun, P.R.C., 130024.}
\email{baizd@nenu.edu.cn}
\thanks{Zhidong Bai is partially supported by a grant NSF China 11571067}

\author{Guangming Pan}
\address{School of Physical and Mathematical Sciences, Nanyang Technological University,
Singapore, 637371}
\email{gmpan@ntu.edu.sg}
\thanks{G. M. Pan was partially supported by a MOE Tier 2 grant 2014-T2-2-060
and by a MOE Tier 1 Grant RG25/14 at the Nanyang Technological University, Singapore.}

\author{Yanqing Yin}
\address{KLASMOE and School of Mathematics and Statistics, Northeast Normal University, Changchun, P.R.C., 130024.}
\email[Corresponding author]{yinyq799@nenu.edu.cn}
\thanks{Yanqing Yin was partially supported by a project of China Scholarship Council}


\keywords{Breusch and Pagan test, White's test, heteroscedasticity, homoscedasticity, high-dimensional
regression, design matrix}

\maketitle
\begin{abstract}
This paper is to prove the asymptotic normality of a statistic for detecting the existence of heteroscedasticity for linear regression models without assuming randomness of covariates when the sample size $n$ tends to infinity and the number of covariates $p$ is either fixed or tends to infinity. Moreover our approach indicates that its asymptotic normality holds even without homoscedasticity.
\end{abstract}

\section{Introduction}

\subsection{A brief review of homoscedasticity test}

Consider the classical multivariate linear regression model of $p$ covariates
\begin{align}
  y_i=\mb x_i\mb \beta+\mb \varepsilon_i,\ \ \ \ \ i=1,2,\cdots,n,
\end{align}
where $y_i$ is the response variable, $\mb x_i=(x_{i,1},x_{i,2},\cdots,x_{i,p})$ is the $p$-dimensional covariates, $\beta=\(\beta_1,\beta_2,\cdots,\beta_p\)'$ is the $p$ dimensional regression coefficient vector and $\varepsilon_i$ is the independent random errors obey the same distribution with zero mean and variance $\sigma_i^2$. In most applications of the linear regression models the homoscedasticity is a very important assumption. Without it, the loss in efficiency in using ordinary least squares (OLS) may be substantial and even worse, the biases in estimated standard errors may lead to invalid inferences. Thus, it is very important to examine  the homoscedasticity. Formally, we need to test the hypothesis
\begin{equation}\label{a1}
H_0: \ \sigma_1^2=\sigma_2^2=\cdots=\sigma_n^2=\sigma^2,
\end{equation}
where $\sigma^2$ is a positive constant.

In the literature there are a lot of work considering this hypothesis test when the dimension $p$ is fixed. Indeed, many popular tests have been proposed. For example Breusch and Pagan \cite{breusch1979simple} and White \cite{white1980heteroskedasticity} proposed statistics to investigate the relationship between the estimated errors and the covariates in economics. While in statistics, Dette and Munk \cite{dette1998estimating}, Glejser \cite{glejser1969new}, Harrison and McCabe \cite{harrison1979test}, Cook
and Weisberg \cite{cook1983diagnostics}, Azzalini and Bowman\cite{azzalini1993use} proposed nonparametric statistics to conduct the hypothesis. One may refer to Li and Yao \cite{li2015homoscedasticity} for more details in this regard.

The development of computer science makes it possible for people to collect and deal with high-dimensional data. As a consequence, 
high-dimensional linear regression problems are becoming more and more common due to widely available covariates. Note that the above mentioned tests are all developed under the low-dimensional framework when the dimension $p$ is fixed and the sample size $n$ tends to infinity.  

In Li and Yao's paper, they proposed two test statistics in the high dimensional setting by using the regression residuals. The first statistic uses the idea of likelihood ratio and the second one uses the idea that ``the departure of a sequence of numbers from a constant can be efficiently assessed by its coefficient of variation", which is closely related to John's idea \cite{john1971some}. By assuming that the distribution of the covariates is $\mb N(\mb 0, \mb I_p)$ and that the error obey the normal distribution, the ``coefficient of variation" statistic turns out to be a function of residuals.  But its asymptotic distribution missed some part as indicated from the proof of Lemma 1 in \cite{li2015homoscedasticity} even in the random design. 

 The aim of this paper is to establish central limit theorem for the ``coefficient of variation" statistic without assuming randomness of the covariates by using the information in the projection matrix (the hat matrix).
 This ensures that the test works when the design matrix is both fixed and random. More importantly we prove that the asymptotic normality of this statistics holds even without homoscedasticity. That assures a high power of this test.

The structure of this paper is as follows. Section 2 is to give our main theorem and some simulation results, as well as two real data analysis. Some calculations and the proof of the asymptotic normality are presented in Section 3.

\section{Main Theorem, Simulation Results and Real Data Analysis}

\subsection{The Main Theorem}
Suppose that the parameter vector $\beta$ is estimated by the OLS estimator $$\hat{ {\beta}}=\(\mb X'\mb X\)^{-1}\mb X'\mb Y.$$
Denote the residuals by $$\hat{\mb \varepsilon}=\(\hat{\mb \varepsilon_1},\hat{\mb \varepsilon_2},\cdots,\hat{\mb \varepsilon_n}\)'=\mb Y-\mb X\hat \beta=\mb P\varepsilon,$$ with $\mb P=(p_{ij})_{n\times n}=\mb I_n-\mb X(\mb X'\mb X)^{-1}\mb X'$ and $\varepsilon=\(\varepsilon_1,\varepsilon_2,\cdots,\varepsilon_n\)'$. Let $\mb D$ be an $n\times n$ diagonal matrix with its $i$-th diagonal entry being $\sigma_i$, set $\mb A=(a_{ij})_{n\times n}=\mb P\mb D$ and let $\xi=\(\xi_1,\xi_2,\cdots,\xi_n\)'$ stand for a standard $n$ dimensional random vector whose entries obey the same distribution with $\varepsilon$. It follows that the distribution of $\hat \varepsilon$ is the same as that of $\mb A\xi$. In the following, we use $\di\(\mb B\)=\(b_{1,1},b_{2,2},\cdots,b_{n,n}\)'$ to stand for the vector formed by the diagonal entries of $\mb B$ and $\di'\(\mb B\)$ as its transpose, use $\mb D_{\mb B}$ stand for the diagonal matrix of $\mb B$, and use $\mb 1$ stand for the vector $\(1,1,\cdots,1\)'$.

Consider the following statistic
\begin{equation}\label{a2}
\mb T=\frac{\sum_{i=1}^n\(\hat{\varepsilon_i}^2-\frac{1}{n}\sum_{i=1}^n\hat{\varepsilon_i}^2\)^2}{\frac{1}{n}\(\sum_{i=1}^n\hat{\varepsilon_i}^2\)^2}.
\end{equation}
  We below use $\mb A \du \mb B$ to denote the Hadamard product of two matrices $\mb A$ and $\mb B$ and use $\mb A ^{\du k}$ to denote the Hadamard product of k $\mb A$.
\begin{thm} \label{th1}
  Under the condition that the distribution of $\varepsilon_1$ is symmetric, $\E|\varepsilon_1|^8\leq \infty$ and $p/n\to y\in [0,1)$ as $n\to \infty$, we have
  $$\frac{\mb T-a}{\sqrt b}\stackrel{d}{\longrightarrow}\mb N(0,1)$$
\end{thm}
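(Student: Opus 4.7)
The plan is to exploit the distributional identity $\hat{\mb\varepsilon}\stackrel{d}{=}\mb A\xi$ to reduce $\mb T$ to a function of two explicit polynomial forms in the i.i.d. vector $\xi$, then linearize the ratio and apply a martingale central limit theorem. First I would rewrite
$$\mb T=\frac{n V_n}{U_n^2}-1,\qquad U_n:=\sum_{i=1}^n(\mb A\xi)_i^{2}=\xi'\mb A'\mb A\xi,\qquad V_n:=\sum_{i=1}^n(\mb A\xi)_i^{4},$$
so that the problem becomes one of establishing a joint CLT for $(U_n,V_n)$ after proper centering. Using $\mb P^{2}=\mb P$ and the symmetry of $\xi$, one obtains $\E U_n=\tr(\mb P\mb D^{2})$ and $\E V_n=3\sum_i(\mb A\mb A')_{ii}^{2}+(\nu_4-3)\,\mb 1'(\mb A^{\du 4})\mb 1$, where $\nu_4=\E\xi_1^{4}$; the Hadamard-power notation $\mb A^{\du 4}$ captures the excess-kurtosis contribution cleanly and will recur in the variance formulas.

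Next I would carry out a delta-method linearization around the deterministic centres $\mu_U:=\E U_n$, $\mu_V:=\E V_n$:
$$\mb T-a_n=\frac{n}{\mu_U^{2}}(V_n-\mu_V)-\frac{2n\mu_V}{\mu_U^{3}}(U_n-\mu_U)+R_n,\qquad a_n:=\frac{n\mu_V}{\mu_U^{2}}-1.$$
The remainder $R_n$ is handled by concentration: $U_n-\mu_U$ and $V_n-\mu_V$ are both $O_p(\sqrt{n})$ under $\E|\varepsilon_1|^{8}<\infty$, while $\mu_U\asymp n$ because $\tr\mb P=n-p$ and $p/n\to y<1$, so quadratic residuals are negligible.

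The heart of the proof is the joint CLT for the linear combination $W_n:=\alpha_n(V_n-\mu_V)+\beta_n(U_n-\mu_U)$ with $\alpha_n=n/\mu_U^{2}$, $\beta_n=-2n\mu_V/\mu_U^{3}$. I would apply the martingale difference decomposition $W_n=\sum_{k=1}^{n}(\E_k-\E_{k-1})W_n$, where $\E_k$ conditions on $\xi_1,\dots,\xi_k$, and invoke the martingale CLT (Billingsley/Brown). Verification requires (i) computing the limit of the sum of conditional variances, which leads to an explicit $b$ involving traces such as $\tr(\mb P\mb D^{2})^{2}$, $\tr(\mb P\mb D^{2}\mb P\mb D^{2})$ and Hadamard-type sums $\mb 1'(\mb A^{\du 2}\mb A'^{\du 2})\mb 1$; and (ii) a Lindeberg condition for the differences, for which the $\E|\varepsilon_1|^{8}<\infty$ assumption is exactly what is needed because the $k$th martingale difference of $V_n$ contains a cubic term in $\xi_k$ whose square is a sixth-order quantity.

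The main obstacle will be in step three: the martingale differences of the quartic form $V_n$ involve cubic polynomials in $\xi_k$ with coefficients that are themselves random quadratic forms in $(\xi_1,\dots,\xi_{k-1})$, and the cross-covariance between these differences and those of $U_n$ must be computed to the correct leading order. The symmetry of $\varepsilon_1$ kills the odd-order cross terms, drastically simplifying both the drift $a$ and the variance $b$, and this is where the symmetry hypothesis pays its way. Once $b$ is identified, the conclusion $(\mb T-a)/\sqrt{b}\stackrel{d}{\longrightarrow}\mb N(0,1)$ follows by Slutsky's theorem combined with the linearization above.
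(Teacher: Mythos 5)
Your proposal is correct in outline but takes a genuinely different route from the paper. The paper reduces $\mb T$ to the pair $\mb T_1=\sum_i\hat\varepsilon_i^4$, $\mb T_2=n^{-1}\bigl(\sum_i\hat\varepsilon_i^2\bigr)^2$ (your $V_n$ and $U_n^2/n$), but establishes the joint CLT for $\alpha\mb T_1+\beta\mb T_2$ by the \emph{method of moments}: after truncating $\xi_i$ at $\eta_n n^{1/8}$, each mixed moment is expanded as a sum over graphs whose vertices are the $i$- and $j$-indices, non-pairing graphs are shown to be negligible via a bound on graph-associated multiple matrices (Lemma A.4.2 of Bai--Silverstein), and the surviving pairings reproduce the Gaussian moment recursion $M'_{2s}\to(2s-1)!!$. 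The delta method then finishes exactly as you describe. Your martingale-difference route (Cram\'er--Wold plus Brown's CLT) is a legitimate alternative and arguably more standard for quadratic forms; what it buys is avoiding the truncation step and the graph-counting machinery for higher moments. What it does not buy is any escape from the eighth-order combinatorics: the sum of conditional variances of the martingale differences of $V_n$ is itself a random polynomial of degree up to $8$ in the $\xi_j$, and proving that it concentrates around the deterministic $b$ (and identifying $b$, which in the paper occupies all of Section 3.3 and involves terms such as $\nu_8\,\mb 1'(\mb P^{\circ4}\mb P^{\circ4})\mb 1$) is precisely the work you defer to ``step three.'' Two small corrections: the $k$-th martingale difference of $V_n$ contains not only a cubic but also a \emph{quartic} term $\sum_i a_{ik}^4(\xi_k^4-M_4)$, whose square is the eighth-order quantity that actually forces $\E|\varepsilon_1|^8<\infty$ into the variance (cf.\ the paper's Remark 2.1); and your centering $a_n=n\mu_V/\mu_U^2-1$ agrees with the paper's $a=f(\E\mb T_1,\E\mb T_2)$ only because $\E(U_n^2)=\mu_U^2+O(n)$, a point worth making explicit since the theorem's $a$ is a finite-$n$ quantity, not a limit.
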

 where $a$, $b$ are determined by $n$, $p$ and $\mb A$. Under $H_0$, we further have
$$a=\(\frac{n\(3\tr\(\mb P\du \mb P\)+\nu_4\tr(\mb P\du \mb P)^2\)}{\(\(n-p\)^2+2\(n-p\)+\nu_4\tr (\mb P\du \mb P)\)}-1\),\ b=\Delta'\Theta\Delta,$$ where
$$\Delta'=(\frac{n}{\(\(n-p\)^2+2\(n-p\)+\nu_4\tr (\mb P\du \mb P)\)},-\frac{n^2\(3\tr\(\mb P\du \mb P\)+\nu_4\tr(\mb P\du \mb P)^2\)}{{\(\(n-p\)^2+2\(n-p\)+\nu_4\tr (\mb P\du \mb P)\)}^2})$$ and
$$\Theta=\left(
 \begin{array}{cc}
  \Theta_{11} & \Theta_{12} \\
   \Theta_{21} & \Theta_{22} \\
   \end{array}
   \right),
$$
where
\begin{align}
\Theta_{11}=&72\di'(\mb P) \(\mb P\du\mb P\)\di (\mb P)+24\tr\(\mb P\du\mb P\)^2\\\notag
&+\nu_4\(96\tr\mb P \mb {D_P} \mb P \mb P^{\du 3}+72\tr(\mb P\du \mb P)^3+36\di'(\mb P) \(\mb P\du\mb P\)^2\di (\mb P) \)\\\notag
&+\nu^2_4\( 18\tr(\mb P\du\mb P)^4+16\tr (\mb P^{\du 3}\mb P)^2)\)\\\notag
&+\nu_6\(12\tr\(\(\mb P\mb D_{\mb P}\mb P \)\du\(\mb P^{\du 2}\mb P^{\du 2}\)\)+16\tr \mb P \mb P^{\du 3}\mb P^{\du 3}\)+\nu_8\mb 1'(\mb P^{\du 4}\mb P^{\du 4})\mb 1,
\end{align}

\begin{align}
\Theta_{22}=\frac{8\(n-p\)^3+4\nu_4\(n-p\)^2\tr(\mb P\du\mb P)}{n^2},
\end{align}

\begin{align}
&\Theta_{12}=\Theta_{21}\\\notag
=&\frac{\(n-p\)}{n}\(24\tr\(\mb P\du \mb P\)+16\nu_4\tr(\mb P \mb P^{\du 3})+12\nu_4\tr\(\(\mb P\mb D_{\mb p}\mb P\)\du \mb P\)
+2\nu_6[\di(\mb P)'(\mb P^{\du 4})\mb 1]\),
\end{align}

$\nu_4=M_4-3$ , $\nu_6=M_6-15 M_4+30$ and $\nu_8=M_8-28 M_6-35M_4^2+420M_4-630$ are the corresponding cumulants of random variable $\varepsilon_1$.

\begin{remark}
The existence of the 8-th moment is necessary because it determines the asymptotic variance of the statistic.
\end{remark}

\begin{remark}
The explicit expressions of $a$ and $b$ are given in Theorem \ref{th1} under $H_0$. However the explicit expressions of $a$ and $b$ are quite complicated under $H_1$. Nevertheless one may obtain them from (\ref{e1})-(\ref{e2}) and (\ref{t10})-(\ref{ct12}) below.
\end{remark}
\begin{remark}
  In Li and Yao's paper, under the condition that the distribution of $\varepsilon$ is normal, they also did some simulations when the design matrices are non-Gaussian. Specifically speaking, they also investigated the test when the entries of design matrices are drawn from gamma distribution $G(2,2)$ and uniform distribution $U(0,1)$ respectively. There is no significant difference in terms of size and power between these two non-normal designs and the normal design. This seems that the proposed test is robust against the form of the distribution of the design matrix. But according to our main theorem, it is not always the case. In our main theorem, one can find that when the error $\varepsilon$ obey the normal distribution, under $H_0$ and given $p$ and $n$, the expectation of the statistics is only determined by $\tr(\mb P\du \mb P)$. We conduct  some simulations to investigate the influence of the distribution of the design matrix on this term when $n=1000$ and $p=200$. The simulation results are presented in table \ref{table1}.
\begin{table}[!hbp]
  \center
    \begin{tabular}{|c|c|c|c|c|c|}
       \hline
        & $N(0,1)$ & $G(2,2)$ & $U(0,1)$ & $F(1,2)$ & $exp(N(5,3))/100$ \\
       \hline
       $\tr(\mb P\du \mb P)$ & 640.3 & 640.7 & 640.2 & 712.5 & 708.3 \\
       \hline
     \end{tabular}
  \caption{The value of $\tr(\mb P\du \mb P)$ corresponding to different design distributions}\label{table1}
  \end{table}
  It suggests that even if the entries of the design matrix are drawn from some common distribution, the expectation of the statistics may deviate far from that of the normal case. This will cause a wrong test result. Moreover, even in the normal case, our result is more accurate since we do not use any approximate value in the mean of the statistic $T$.
\end{remark}

\begin{remark}
  Let's take an example to explain why this test works. For convenient, suppose that $\varepsilon_1$ obey the  normal distribution. From the calculation in Section \ref{exp} we know that the expectation of the statistic $\mb T$ defined in (\ref{a2})
  can be represented as $$\E \mb T=\frac{3n\sum_{i=1}^np_{ii}^2\sigma_i^4}{(\sum_{i=1}^np_{ii}\sigma_i^2)^2}-1+o(1).$$
 Now assume that $p_{ii}=\frac{n-p}{n}$ for all $i=1,\cdots,n$. Moreover, without loss of generality, suppose that $\sigma_1=\cdots=\sigma_n=1$ under $H_0$ so that we get $\E \mb T\to 2$ as $n \to \infty$. However, when $\sigma_1=\cdots=\sigma_{[n/2]}=1$ and $\sigma_{[n/2]+1}=\cdots=\sigma_{n}=2$, one may obtain $\E \mb T\to 3.08$ as $n\to \infty$. Since  ${\rm Var}(\mb T)=O(n^{-1})$ this ensures a high power as long as $n$ is large enough.
\end{remark}

\subsection{Some simulation results}

We next conduct some simulation results to investigate the performance of our test statistics.
Firstly, we consider the condition when the random error obey the normal distribution.
Table \ref{table2} shows the empirical size compared with Li and Yao's result in \cite{li2015homoscedasticity} under four different design distributions. We use $``{\rm CVT}"$ and $``{\rm FCVT}"$ to represent their test and our test respectively. The entries of design matrices are $i.i.d$ random samples generated  from $N(0,1)$, $t(1)$ ($t$ distribution with freedom degree 1), $F(3,2)$ ($F$ distribution with parameters 3 and 2) and logarithmic normal distribution respectively. The sample size $n$ is 512 and the dimension of covariates varies from 4 to 384.  We also follow \cite{dette1998testing} and consider the following two models:
\begin{description}
  \item[Model 1] $y_i=\mb x_i\mb \beta+\mb \varepsilon_i(1+\mb x_i \mb h),\ \ \ \ \ i=1,2,\cdots,n$, \\ where $\mb h=(1,\mb 0_{(p-1)})$,
  \item[Model 2] $y_i=\mb x_i\mb \beta+\mb \varepsilon_i(1+\mb x_i \mb h),\ \ \ \ \ i=1,2,\cdots,n $ \\
      where $\mb h=(\mb 1_{(p/2)},\mb 0_{(p/2)})$.
\end{description}
Tables \ref{table3} and \ref{table4} show the empirical power compared with Li and Yao's results under four different regressors distributions mentioned above.

Then, we consider the condition that the random error obey the two-point distribution. Specifically speaking, we suppose $P(\varepsilon_1=-1)=P(\varepsilon_1=1)=1/2$. Since Li and Yao's result is unapplicable in this situation, Table \ref{table5} just shows the empirical size and empirical power under Model 2 of our test under four different regressors  distributions mentioned above.

According to the simulation result, it is showed that when $p/n\to [0,1)$ as $n\to \infty$, our test always has good size and power under all regressors distributions.

\begin{table}[!hbp]
  \center
    \begin{tabular}{|c|cc|cc|cc|cc|cc|}
       \hline
         & N(0,1)& & t(1) & & $F(3,2)$ & & $e^{(N(5,3))}$ & \\
        \hline
        p & FCVT & CVT & FCVT & CVT & FCVT & CVT & FCVT& CVT\\
       \hline
       4 & 0.0582 & 0.0531 & 0.0600 & 0.0603 & 0.0594 & 0.0597 & 0.0590 & 0.0594\\
       \hline
       16 & 0.0621 & 0.0567 & 0.0585 & 0.0805 & 0.0585 & 0.0824 & 0.0595 & 0.0803\\
       \hline
       64 & 0.0574 & 0.0515 & 0.0605 & 0.2245 & 0.0586 & 0.2312 & 0.0578 & 0.2348\\
       \hline
       128 & 0.0597 & 0.0551 & 0.0597 & 0.5586 & 0.0568 & 0.5779 & 0.0590 & 0.5934\\
       \hline
       256 & 0.0551 & 0.0515 & 0.0620 & 0.9868 & 0.0576 & 0.9908 & 0.0595 & 0.9933\\
       \hline
       384 & 0.0580 & 0.0556 & 0.0595 & 1.0000 & 0.0600 & 1.0000 & 0.0600 & 1.0000\\
       \hline
     \end{tabular}
  \caption{empirical size under different distributions}\label{table2}
  \end{table}

  \begin{table}[!hbp]
  \center
    \begin{tabular}{|c|cc|cc|cc|cc|cc|}
       \hline
         & N(0,1)& & t(1) & & $F(3,2)$ & & $e^{(N(5,3))}$ & \\
        \hline
        p & FCVT & CVT & FCVT & CVT & FCVT & CVT & FCVT& CVT\\
       \hline
       4 & 1.0000 & 1.0000 & 1.0000 & 1.0000 & 1.0000 & 1.0000 & 1.0000 & 1.0000\\
       \hline
       16 & 1.0000 & 1.0000 & 1.0000 & 1.0000 & 1.0000 & 1.0000 & 1.0000 & 1.0000\\
       \hline
       64 & 1.0000 & 1.0000 & 1.0000 & 1.0000 & 1.0000 & 1.0000 & 1.0000 & 1.0000\\
       \hline
       128 & 1.0000 & 1.0000 & 1.0000 & 1.0000 & 1.0000 & 1.0000 & 1.0000 & 1.0000\\
       \hline
       256 & 1.0000 & 1.0000 & 1.0000 & 1.0000 & 1.0000 & 1.0000 & 1.0000 & 1.0000\\
       \hline
       384 & 0.8113 & 0.8072 & 0.9875 & 1.0000 & 0.9876 & 1.0000 & 0.9905 & 1.0000\\
       \hline
     \end{tabular}
  \caption{empirical power under model 1}\label{table3}
  \end{table}

  \begin{table}[!hbp]
  \center
    \begin{tabular}{|c|cc|cc|cc|cc|cc|}
       \hline
         & N(0,1)& & t(1) & & $F(3,2)$ & & $e^{(N(5,3))}$ & \\
        \hline
       p & FCVT & CVT & FCVT & CVT & FCVT & CVT & FCVT& CVT\\
       \hline
       4 & 1.0000 & 1.0000 & 1.0000 & 1.0000 & 1.0000 & 1.0000 & 1.0000 & 1.0000\\
       \hline
       16 & 1.0000 & 1.0000 & 1.0000 & 1.0000 & 1.0000 & 1.0000 & 1.0000 & 1.0000\\
       \hline
       64 & 1.0000 & 1.0000 & 1.0000 & 1.0000 & 1.0000 & 1.0000 & 1.0000 & 1.0000\\
       \hline
       128 & 1.0000 & 1.0000 & 1.0000 & 1.0000 & 1.0000 & 1.0000 & 1.0000 & 1.0000\\
       \hline
       256 &  1.0000 & 1.0000 & 1.0000 & 1.0000 & 1.0000 & 1.0000 & 1.0000 & 1.0000\\
       \hline
       384 &  0.9066 & 0.9034 & 0.9799 & 1.0000 & 0.9445 & 1.0000 & 0.8883 & 1.0000\\
       \hline
     \end{tabular}
  \caption{empirical power under model 2}\label{table4}
  \end{table}

\begin{table}[!hbp]
  \center
    \begin{tabular}{|c|cc|cc|cc|cc|cc|}
       \hline
         & N(0,1)& & t(1) & & $F(3,2)$ & & $e^{(N(5,3))}$ & \\
        \hline
        p & Size & Power & Size & Power & Size & Power & Size & Power \\
       \hline
       4 & 0.0695 & 1.0000 & 0.0726 & 1.0000 & 0.0726 & 1.0000 & 0.0664 & 1.0000\\
       \hline
       16 & 0.0695 &  1.0000 & 0.0638 & 1.0000 & 0.0706 & 1.0000 & 0.0556 & 1.0000\\
       \hline
       64 & 0.0646 &  1.0000 & 0.0606 & 1.0000 & 0.0649 & 1.0000 & 0.0622 & 1.0000\\
       \hline
       128 & 0.0617 &  1.0000 & 0.0705 & 1.0000 & 0.0597 & 1.0000 & 0.0630 & 1.0000\\
       \hline
       256 & 0.0684 &  1.0000 & 0.0685 & 1.0000 & 0.0608 & 1.0000 & 0.0649 & 1.0000\\
       \hline
       384 & 0.0610 &  0.8529 & 0.0748 & 1.0000 & 0.0758 & 1.0000 & 0.0742 & 1.0000\\
       \hline
     \end{tabular}
  \caption{empirical size and power under different distributions}\label{table5}
  \end{table}

\subsection{Two Real Rata Analysis}

\subsubsection{The Death Rate Data Set}

In \cite{mcdonald1973instabilities}, the authors fitted a multiple linear regression of the total age adjusted mortality rate on 15 other variables (the average annual precipitation, the average January temperature, the average July temperature, the size of the population older than 65, the number of members per household, the number of years of schooling for persons over 22, the number of households with fully equipped kitchens, the population per square mile, the size of the nonwhite population, the number of office workers, the number of families with an income less than \$3000, the hydrocarbon pollution index, the nitric oxide pollution index, the sulfur dioxide pollution index and the degree of atmospheric moisture). The number of observations is 60. To investigate whether the homoscedasticity assumption in this models is justified, we applied our test and got a p-value of 0.4994, which strongly supported the assumption of constant variability in this model since we use the one side test. The data set is available at \url{http://people.sc.fsu.edu/~jburkardt/datasets/regression/regression.html}.

\subsubsection{The 30-Year Conventional Mortgage Rate Data Set}

The 30-Year Conventional Mortgage Rate data \cite{Mortgage} contains the economic data information of USA from 01/04/1980 to 02/04/2000 on a weekly basis (1049 samples). The goal is to predict the 30-Year Conventional Mortgage Rate by other 15 features . We used a multiple linear regression to fit this data set and got a good result. The adjusted R-squared is 0.9986, the P value of the overall F-test is 0. Our homoscedasticity test reported a p-value 0.4439.

\section{Proof Of The Main Theorem}

This section is to prove the main theorem. The first step is to establish the asymptotic normality of $\mb T_1$, $\mb T_2$ and $\alpha\mb T_1+\beta\mb T_2$ with $\alpha^2+\beta^2 \neq 0$  by the moment convergence theorem. Next we will calculate the expectations, variances and covariance of the statistics $\mb T_1=\sum_{i=1}^n\hat{\varepsilon_i}^4$ and $\mb T_2=\frac{1}{n}\(\sum_{i=1}^n\hat{\varepsilon_i}^2\)^2$. The main theorem then follows by the delta method. Note that without loss of generality, under $H_0$, we can assume that $\sigma=1$.

\subsection{The asymptotic normality of the statistics.}\label{clt}

 We start by giving a definition in Graph Theory.


\begin{definition}
  A graph $\mb G=\(\mb V,\mb E,\mb F\)$ is called two-edge connected, if removing any one edge from $G$, the resulting subgraph is still connected.
\end{definition}

The next lemma is a fundamental theorem for Graph-Associated Multiple Matrices without the proof. For the details of this theorem, one can refer to the section $\mb A.4.2$ in \cite{bai2010spectral}.

\begin{lemma}\label{lm2}
 Suppose that $\mb G=\(\mb V,\mb E, \mb F\)$ is a two-edge connected graph with $t$ vertices and $k$ edges. Each vertex $i$ corresponds to an integer $m_i \geq 2$ and each edge $e_j$ corresponds to a matrix $\mb T^{(j)}=\(t_{\alpha,\beta}^{(j)}\),\ j=1,\cdots,k$, with consistent dimensions, that is, if $F(e_j)=(f_i(e_j),f_e(e_j))=(g,h),$ then the matrix $\mb T^{\(j\)}$ has dimensions $m_g\times m_h$. Define $\mb v=(v_1,v_2,\cdots,v_t)$ and
 \begin{align}
   T'=\sum_{\mb v}\prod_{j=1}^kt_{v_{f_i(e_j)},v_{f_e(e_j)}}^{(j)},
 \end{align}
 where the summation $\sum_{\mb v}$ is taken for $v_i=1,2,\cdots, m_i, \ i=1,2,\cdots,t.$ Then for any $i\leq t$,
 we have
 $$|T'|\leq m_i\prod_{j=1}^k\|\mb T^{(j)}\|.$$
\end{lemma}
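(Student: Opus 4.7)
The plan is induction on the number of edges $k$, using the fact that every two-edge-connected graph admits an ear decomposition starting from a cycle through any prescribed vertex. I would first fix a simple cycle in $\mb G$ passing through the distinguished vertex $i$ (possible since two-edge-connectivity forces $i$ to lie on a cycle), and then build up $\mb G$ by adjoining ears one at a time, arranging that $i$ remains in every intermediate subgraph.

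The base case treats a simple cycle with $k=t$, which is the minimal two-edge-connected graph on $t$ vertices (every vertex has degree at least $2$, so $2k\geq 2t$ with equality iff every vertex has degree exactly $2$). Traversing the cycle starting at $i$ and transposing whichever $\mb T^{(j)}$ have orientations against the traversal direction, the sum $T'$ rewrites as $\tr(\tilde{\mb T}^{(1)}\cdots\tilde{\mb T}^{(k)})$ with each $\tilde{\mb T}^{(j)}\in\{\mb T^{(j)},(\mb T^{(j)})^T\}$. The inequality $|\tr(\mb M)|\leq(\dim\mb M)\|\mb M\|$ combined with submultiplicativity of the spectral norm and $\|\mb A^T\|=\|\mb A\|$ then yields the bound $m_i\prod_{j=1}^k\|\mb T^{(j)}\|$ at once, where cyclic invariance of the trace lets me put $m_i$ as the outer dimension.

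For the inductive step I would write $\mb G=\mb G'\cup\pi$, where $\pi$ is the last ear attached, carrying edges $e_{j_1},\ldots,e_{j_s}$ and $s-1$ new interior vertices $u_1,\ldots,u_{s-1}$ whose only neighbours are along $\pi$, with endpoints $g,h\in\mb G'$. When $s\geq 2$, summing out $v_{u_1},\ldots,v_{u_{s-1}}$ first telescopes the ear into a single effective edge between $g$ and $h$ labelled by $\mb M=\tilde{\mb T}^{(j_1)}\cdots\tilde{\mb T}^{(j_s)}$ with $\|\mb M\|\leq\prod_{r=1}^s\|\mb T^{(j_r)}\|$, and the inductive hypothesis applied to the smaller two-edge-connected graph $\mb G'$ together with the single effective edge (still containing $i$) closes the subcase. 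The chord subcase $s=1$ does not reduce the edge count by collapsing; here the standard trick is to recognise the partial sum as $\tr(\Phi^T\mb T^{(\mathrm{chord})})$ for an auxiliary matrix $\Phi$ built from the rest of $\mb G'$, apply Cauchy--Schwarz to get $\|\Phi\|_F\|\mb T^{(\mathrm{chord})}\|_F$, and then symmetrise by ``doubling'' the graph along the chord so that Frobenius norms get turned back into spectral norms at the cost of a square root which is absorbed by taking the doubled graph's edge count into account.

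The main obstacle is precisely this chord step: preserving the sharp constant $m_i$, rather than a weaker product of several dimensions $m_g,m_h,\ldots$, through the Cauchy--Schwarz/doubling reduction requires careful bookkeeping of which vertex plays the role of the distinguished one in the doubled graph. This is the delicate accounting that Section A.4.2 of \cite{bai2010spectral} carries out in detail; the rest of the argument is essentially an ear-by-ear peeling of $\mb G$ down to the initial cycle.
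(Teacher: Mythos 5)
First, a point of comparison: the paper does not prove this lemma at all --- it is explicitly stated ``without the proof,'' with the reader referred to Section A.4.2 of Bai and Silverstein (2010). So there is no in-paper argument to match yours against, and your proposal has to stand on its own merits. Your base case and your $s\geq 2$ ear step do stand: a cycle through $i$ gives $T'=\tr(\tilde{\mb T}^{(1)}\cdots\tilde{\mb T}^{(k)})$ and $|\tr \mb M|\leq m_i\|\mb M\|$, and summing out the degree-$2$ interior vertices of an ear replaces it by one effective edge carrying the product matrix, leaving a two-edge connected graph that still contains $i$ and has strictly fewer edges.

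The genuine gap is the chord case $s=1$, and it cannot be dodged: already for $K_4$ every ear decomposition must end with a single-edge ear, so this case carries real weight. As you set it up, Cauchy--Schwarz gives $|T'|\leq\|\Phi\|_F\,\|\mb T^{(\mathrm{chord})}\|_F$, where $\|\Phi\|_F^2$ is exactly the sum associated with the graph obtained by doubling $\mb G'$ along $\{g,h\}$. That doubled graph has $2(k-1)$ edges, which exceeds $k$ once $k\geq 3$, so an induction on the number of edges does not terminate here --- the step is circular, not merely ``delicate bookkeeping,'' and you would need a different induction parameter or a different inequality altogether. Moreover, even granting the bound for the doubled graph, the estimate $\|\mb T\|_F\leq\sqrt{\min(m_g,m_h)}\,\|\mb T\|$ yields $|T'|\leq\sqrt{m_i\min(m_g,m_h)}\prod_j\|\mb T^{(j)}\|$, which is not bounded by $m_i\prod_j\|\mb T^{(j)}\|$ when $m_g$ and $m_h$ are much larger than $m_i$ (and the lemma is claimed for \emph{every} $i$, hence for the smallest $m_i$). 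So the constant is genuinely lost, not just awkward to track. You acknowledge both difficulties and defer them to the very reference the paper itself cites; since that is the entire content of the lemma beyond the cycle case, the proposal as written is an outline rather than a proof.
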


Let $\mathcal{T}=(\mb T^{(1)},\cdots,\mb T^{(k)})$ and define $G(\mathcal{T})=(G,\mathcal{T})$ as a Graph-Associated Multiple Matrices. Write $T'=sum(G(\mathcal{T}))$, which is referred to as the summation of the corresponding Graph-Associated Multiple Matrices.

We also need the following truncation lemma

\begin{lemma}\label{lm3}
Suppose that $\xi_n=\(\xi_1,\cdots,\xi_n\)$ is an i.i.d sequence with $\E|\xi_1|^r \leq \infty$, then there exists a sequence of positive numbers $(\eta_1,\cdots,\eta_n)$ satisfy that as $n \to \infty$, $\eta_n \to 0$ and
 $$P(\xi_n\neq\widehat \xi_n,\ {\rm i.o.})=0,$$
 where
 $\widehat \xi_n=\(\xi_1 I(|\xi_1|\leq \eta_n n^{1/r}),\cdots,\xi_n I(|\xi_n|\leq \eta_n n^{1/r})\).$
 And the convergence rate of $\eta_n$ can be slower than any preassigned rate.
\end{lemma}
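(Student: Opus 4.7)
The plan is to invoke the first Borel--Cantelli lemma, combined with a block construction of $\eta_n$ and a monotonization step.

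First I would set up the deterministic reduction. Writing $\{\xi_n\neq\widehat\xi_n\}=\bigcup_{i=1}^{n}\{|\xi_i|>\eta_n n^{1/r}\}$ and letting $c_n:=\eta_n n^{1/r}$, I would show that, on any sample path where $c_n$ is non-decreasing with $c_n\to\infty$ and $|\xi_i|\le c_i$ for all $i\ge N_0(\omega)$, monotonicity forces $|\xi_i|\le c_i\le c_n$ for $N_0\le i\le n$, while the finite random constant $\max_{i<N_0}|\xi_i|$ is eventually dominated by $c_n$. Consequently $\max_{i\le n}|\xi_i|\le c_n$ for all $n$ past some $N_1(\omega)$, i.e., $\xi_n=\widehat\xi_n$ for such $n$. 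The target statement therefore reduces to $P(|\xi_n|>c_n,\ \text{i.o.})=0$, and by Borel--Cantelli together with identical distribution it suffices to arrange $\eta_n\to 0$, $c_n$ non-decreasing, $c_n\to\infty$, and $\sum_n P(|\xi_1|>c_n)<\infty$.

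Next I would build $\eta_n$ via a block argument. Set $Y=|\xi_1|^r$, so $\E Y<\infty$. Pick $c_k\downarrow 0$ (no faster than any preassigned rate), and for each $k$ choose $N_k\uparrow\infty$ so large that $\sum_{n\ge N_k}P(Y>c_k^r n)<2^{-k}$; this is possible because $\sum_n P(Y>c_k^r n)\le c_k^{-r}\E Y<\infty$ for each fixed $k$, so the tail goes to zero. Defining $\eta_n=c_k$ for $n\in[N_k,N_{k+1})$ produces $\eta_n\to 0$ and
\[
\sum_{n}P(|\xi_1|>\eta_n n^{1/r})=\sum_{k}\sum_{n\in[N_k,N_{k+1})}P(Y>c_k^r n)\le\sum_{k}2^{-k}<\infty.
\]
Because $\eta_n n^{1/r}$ can drop slightly at block boundaries, I would then replace $\eta_n$ by its monotonized version $\tilde\eta_n:=n^{-1/r}\max_{i\le n}\eta_i i^{1/r}$. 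By construction $\tilde\eta_n n^{1/r}$ is non-decreasing and tends to infinity; the running max $\max_{i\le n}\eta_i i^{1/r}$ is carried over from at most the previous block, so $\tilde\eta_n\le c_{k-1}$ whenever $n$ lies in block $k$, yielding $\tilde\eta_n\to 0$; and summability is preserved since $\tilde\eta_n\ge\eta_n$ pointwise.

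The main obstacle is really the simultaneous bookkeeping of three competing constraints---slow decay of $\eta_n$, monotonicity of $c_n$, and summability---rather than any deep probabilistic input. The moment hypothesis $\E|\xi_1|^r<\infty$ leaves enough slack through dominated convergence ($\E[Y\,{\bf 1}_{Y>M}]\to 0$) to make all three compatible, and the freedom to push $N_k$ arbitrarily large within each block is precisely what lets the rate of $\eta_n\to 0$ be slower than any preassigned rate.
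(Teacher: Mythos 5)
Your argument is correct, and it reaches the same summable series from $\E|\xi_1|^r<\infty$ via a piecewise-constant, block-defined $\eta_n$, which is also the engine of the paper's proof. The mechanics differ in one genuine respect: the paper never makes $c_n=\eta_n n^{1/r}$ monotone. It takes fixed dyadic blocks $2^{2m-1}\le n\le 2^{2m}$, sets $\eta_n=2^{1/r}\epsilon_m$ there, and bounds the event $\bigcup_{n\in\mathrm{block}}\bigcup_{i\le n}\{|\xi_i|\ge \eta_n n^{1/r}\}$ directly by the single union $\bigcup_{i\le 2^{2m}}\{|\xi_i|\ge \epsilon_m 2^{2m/r}\}$, whose probability is at most $2^{2m}P(|\xi_1|\ge\epsilon_m 2^{2m/r})$; summability of this over $m$ is exactly the dyadic characterization of the $r$-th moment. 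You instead choose the block endpoints $N_k$ adaptively so that each block's tail contributes at most $2^{-k}$, then monotonize $c_n$ so that the whole-vector event $\{\max_{i\le n}|\xi_i|>c_n\}$ i.o.\ reduces to the single-index event $\{|\xi_n|>c_n\}$ i.o., and quote the classical Borel--Cantelli lemma. Your route costs you the extra monotonization bookkeeping (and you should note explicitly that $c_k N_{k+1}^{1/r}$ can be made increasing and unbounded, which your freedom in choosing $N_k$ indeed allows), but it buys a cleaner reduction to the textbook lemma and makes the "arbitrarily slow decay" claim transparent; the paper's block union bound is shorter but leaves the choice of $\epsilon_m$ with slow decay as an unproved (though standard) diagonal step. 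Both proofs are valid.
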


\begin{proof}
$\E|\xi_1|^r \leq \infty$ indicated that for any $\epsilon>0$, we have
$$\sum_{m=1}^\infty 2^{2m}P(|\xi_1|\geq\epsilon2^{2m/r})\leq \infty.$$

Then there exists a sequence of positive numbers $\epsilon=(\epsilon_1,\cdots,\epsilon_m)$ such that
$$\sum_{m=1}^\infty 2^{2m}P(|\xi_1|\geq\epsilon_m2^{2m/r})\leq \infty,$$
and $\epsilon_m \to 0$ as $m \to 0$. And the convergence rate of $\epsilon_m$ can be slower than any preassigned rate.

Now, define $\delta_n=2^{1/r}\epsilon_m$ for $2^{2m-1}\leq n\leq 2^{2m}$, we have as $n\to \infty$
\begin{align}
P(\xi_n\neq\widehat \xi_n,\ {\rm i.o.})\leq &\lim_{k\to \infty}\sum_{m=k}^{\infty}P\Big(\bigcup_{2^{2m-1}\leq n\leq 2^{2m}}\bigcup_{i=1}^n\(|\xi_i|\geq\eta_nn^{1/r}\)\Big)\\\notag
\leq&\lim_{k\to \infty}\sum_{m=k}^{\infty}P\Big(\bigcup_{2^{2m-1}\leq n\leq 2^{2m}}\bigcup_{i=1}^{2^{2m}}\(|\xi_i|\geq\epsilon_m 2^{1/r}2^{\frac{\(2m-1\)}{r}}\)\Big)\\\notag
\leq&\lim_{k\to \infty}\sum_{m=k}^{\infty}P\Big(\bigcup_{2^{2m-1}\leq n\leq 2^{2m}}\bigcup_{i=1}^{2^{2m}}\(|\xi_i|\geq\epsilon_m 2^{{2m}/{r}}\)\Big)\\\notag
=&\lim_{k\to \infty}\sum_{m=k}^{\infty}P\Big(\bigcup_{i=1}^{2^{2m}}\(|\xi_i|\geq\epsilon_m 2^{{2m}/{r}}\)\Big)\\\notag
\leq&\lim_{k\to \infty}\sum_{m=k}^{\infty}2^{2m}P\Big(|\xi_1|\geq\epsilon_m 2^{{2m}/{r}}\Big)=0.
\end{align}
\end{proof}

We note that the truncation will neither change the symmetry of the distribution of $\xi_1$ nor change the order of the variance of $\mb T$.

Now, we come to the proof of the asymptotic normality of the statistics. We below give the proof of the asymptotic normality of $\alpha\mb T_1+\beta\mb T_2$ , where $\alpha^2+\beta^2\neq 0$. The asymptotic normality of either $\mb T_1$ or $\mb T_2$ is a result of setting $\alpha=0$ or $\beta=0$ respectively.

Denote $\mu_1=\E \mb T_1=\E\sum_{i=1}^n\hat \varepsilon_i^4$, $\mu_2=\E \mb T_2=\E n^{-1}\(\sum_{i=1}^n\hat \varepsilon_i^2\)^2$ and $S=\sqrt{{\rm {Var}}\(\alpha \mb T_1+\beta\mb T_2\)}$. Below is devote to calculating the moments of
$\mb T_0=\frac{\alpha \mb T_1+\beta \mb T_2-\(\alpha \mu_1+\beta \mu_2\)}{S}=\frac{\alpha \(\mb T_1-\mu_1\)+\beta \(\mb T_2-\mu_2\)}{S}$.

Note that by Lemma \ref{lm3}, we can assume that $\xi_1$ is truncated at $\eta_n n^{1/8}$. Then we have for large enough $n$ and $l>4$,
$$M_{2l}\leq \eta_n M_8{\sqrt n}^{2l/4-1}. $$

Let's take a look at the random variable
\begin{align}
&\alpha T_1+\beta T_2=\alpha \sum_{i=1}^n\(\sum_{j=1}^n a_{ij}\xi_j\)^4+(n^{-1})\beta \(\sum_{i=1}^n\(\sum_{j=1}^n a_{ij}\xi_j\)^2\)^2\\\notag
=&\alpha \sum_{i,j_1,\cdots,j_4} a_{i,j_1}a_{i,j_2}a_{i,j_3}a_{i,j_4}\xi_{j_1}\xi_{j_2}\xi_{j_3}\xi_{j_4}+(n^{-1})\beta\sum_{i_1,i_2,j_1,\cdots,j_4} a_{i_1,j_1}a_{i_1,j_2}a_{i_2,j_3}a_{i_2,j_4}\xi_{j_1}\xi_{j_2}\xi_{j_3}\xi_{j_4}\\\notag
=&\alpha \sum_{i,j_1,\cdots,j_4} a_{i,j_1}a_{i,j_2}a_{i,j_3}a_{i,j_4}\xi_{j_1}\xi_{j_2}\xi_{j_3}\xi_{j_4}+(n^{-1})\beta\sum_{u_1,u_2,v_1,\cdots,v_4} a_{u_1,v_1}a_{u_1,v_2}a_{u_2,v_3}a_{u_2,v_4}\xi_{v_1}\xi_{v_2}\xi_{v_3}\xi_{v_4}.
\end{align}
We next construct two type of graphs for the last two sums.

For given integers $i,j_1,j_2,j_3,j_4\in [1,n]$, draw a graph as follows:  draw two parallel lines, called the $I$-line and the $J$-line respectively; plot $i$ on the $I$-line and $j_1,j_2,j_3$ and $j_4$ on the $J$-line; finally, we draw four edges from $i$ to $j_t$, $t=1,2,3,4$ marked with $\textcircled{1}$. Each edge $(i,j_t)$ represents the random variable $a_{i,j_t}\xi_{j_t}$ and the graph $G_1(i,\mb j)$ represents $\prod_{\rho=1}^{4}a_{i,j_\rho}\xi_{j_\rho}$. For any given integer $k_1$, we draw $k_1$ such graphs between the $I$-line and the $J$-line denoted by $G_1(\tau)=G_1(i_\tau,\mb j_\tau)$, and write $G_{(1,k_1)}=\cup_{\tau} G_1(\tau)$.

For given integers $u_1,u_2,v_1,v_2,v_3,v_4\in [1,n]$, draw a graph as follows: plot $u_1$ and $u_2$ on the $I$-line and $v_1,v_2,v_3$ and $v_4$ on the $J$-line; then, we draw two edges from $u_1$ to $v_1$ and $v_2$ marked with $\textcircled{2}$ , draw two edges from $u_2$ to $v_3$ and $v_4$ marked with $\textcircled{2}$. Each edge $(u_l,v_t)$ represents the random variable $a_{u_l,v_t}\xi_{v_t}$ and the graph $G_2(\mb u,\mb v)$ represents $a_{u_1,v_1}a_{u_1,v_2}a_{u_2,v_3}a_{u_2,v_4}$.
For any given integer $k_2$, we draw $k_2$ such graphs between the $I$-line and the $J$-line denoted by $G_2(\psi)=G_2(\mb u_\psi,\mb v_\psi)$, and write $G_{(2,k_2)}=\cup_{\psi} G_2(\psi)$, $G_{k}=G_{(1,k_1)}\cup G_{(2,k_2)}$. Then the $k$-th order moment of $\mb T_0$ is
\begin{align*}
M_k'=&S^{-k}\sum_{k_1+k_2=k}{k\choose k_1}\alpha^{k_1}\beta^{k_2}\sum_{\substack{\{i_1,\mb j_1,\cdots,i_{k_1},\mb j_{k_1}\} \\ \{\mb u_1,\mb v_1,\cdots,\mb u_{k_2},\mb v_{k_2}\}}}\\
&n^{-k_2}\E\Big[\prod_{\tau=1}^{k_1}[G_1(i_\tau,\mb j_\tau)-\E(G_1(i_\tau,\mb j_\tau))]\prod_{\phi=1}^{k_2}[G_2(\mb u_\psi,\mb v_\psi)-\E(G_2(\mb u_\psi,\mb v_\psi))]\Big].
\end{align*}

We first consider a graph $G_k$ for the given set of integers $k_1,k_2$, $i_1, \mb j_1,\cdots,i_{k_1},\mb j_{k_1}$ and $\mb u_1,\mb v_1,\cdots,\mb u_{k_2},\mb v_{k_2}$. We have the following simple observations:
Firstly, if $G_k$ contains a $j$ vertex of odd degree, then the term is zero because odd-ordered moments  of random variable $\xi_j$ are 0.
Secondly, if there is a subgraph $G_1(\tau)$ or $G_2(\psi)$ that does not have an $j$ vertex coinciding with any $j$ vertices of other subgraphs, the term is also 0 because $G_1(\tau)$ or $G_2(\psi)$ is independent of the remainder subgraphs.

Then, upon these two observations, we split the summation of non-zero terms in $M_k'$ into a sum of partial sums in accordance of isomorphic classes (two graphs are called isomorphic if one can be obtained from the other by a permutation of $(1,2,\cdots,n)$, and all the graphs are classified into isomorphic  classes. For convenience, we shall choose one graph from an isomorphic class as the canonical graph of that class). That is, we may write
\begin{align*}
M_k'=&S^{-k}\sum_{k_1+k_2=k}{k\choose k_1}\alpha^{k_1}\beta^{k_2}n^{-k_2}\sum_{G_k'}M_{G_k'},
\end{align*}
where
$$
M_{G_k'}=\sum_{G_k\in G_k'}\E G_k.
$$
Here $G_k'$ is a canonical graph and $\sum_{G_k\in G_k'}$ denotes the summation for all graphs $G_k$ isomorphic to $G_k'$.

In that follows, we need the fact that the variances of $\mb T_1$ and $\mb T_2$ and their covariance are all of order n.
This will be proved in Section \ref{var}.

Since all of the vertices in the non-zero canonical graphs have even degrees, every connected component of them is a circle, of course a two-edge connected graph. For a given isomorphic class with canonical graph $G_k'$, denote by $c_{G_k'}$ the number of connected components of the canonical graph $G_k'$. For every connected component $G_0$ that has $l$ non-coincident $J$-vertices with degrees $d_1,\cdots,d_l$, let $d'=\max\{d_1-8,\cdots,d_l-8,0\}$, denote $\mathcal{T}=(\underbrace{\mb A,\cdots,\mb A}_{\sum_{t=1}^l d_t})$ and define $G_0(\mathcal{T})=(G_0,\mathcal{T})$ as a Graph-Associated Multiple Matrices. By Lemma
\ref{lm2} we then conclude that the contribution of this canonical class is at most $\(\prod_{t=1}^l M_{d_t}\)sum(G(\mathcal{T}))=O(\eta_n^{d'}n\sqrt n^{d'/4})$. Noticing that $\eta_n \to 0$, if $c_{G_k'}$  is less than $k/2+k_2$, then the contribution of this canonical class is negligible because
$S^k\asymp n^{k/2}$ and $M_{G_k'}$ in $M_k'$ has a factor of $n^{-k_2}$. However one can see that $c_{G_k'}$ is at most $[k/2]+k_2$ for every ${G_k'}$ by the argument above and noticing that every $G_2(\bullet)$ has two $i$ vertices. Therefore, $M_k'\to 0$ if $k$ is odd.

Now we consider the limit of $M_k'$ when $k=2s$. We shall say that {\it the given set of integers $i_1, \mb j_1,\cdots,i_{k_1},\mb j_{k_1}$ and $\mb u_1,\mb v_1,\cdots,\mb u_{k_2},\mb v_{k_2}$ (or equivalent the graph $G_k$) satisfies the condition $c(s_1,s_2,s_3)$ if in the graph $G_k$ plotted by this set of integers 
there are $2s_1$ $G_1{\(\bullet\)}$ connected pairwisely, $2s_2$ $G_2{\(\bullet\)}$ connected pairwisely and $s_3$ $G_1{\(\bullet\)}$ connected with $s_3$ $G_2{\(\bullet\)}$, where $2s_1+s_3=k_1$, $2s_2+s_3=k_2$ and $s_1+s_2+s_3=s$, say $G_1{\(2\tau-1\)}$ connects $G_1{\(2\tau\)}$, $\tau=1,2,\cdots,s_1$, $G_2{\(2\psi-1\)}$ connects $G_1{\(2\psi\)}$, $\psi=1,2,\cdots,s_2$ and $G_1{\(2s_1+\varphi\)}$ connects $G_2{\(2s_2+\varphi\)}$, $\varphi=1,2,\cdots,s_3$, and there are no other connections between subgraphs.}
Then, for any $G_k$ satisfying $c(s_1,s_2,s_3)$, we have
\begin{align}
  \E G_k=&\prod_{\tau=1}^{s_1}\E[(G_1{\(2\tau-1\)}-\E(G_1{\(2\tau-1\)}))(G_1{\(2\tau\)}-\E(G_1\({2\tau}\)))]\times\\\notag
           &\prod_{\psi=1}^{s_2}\E[(G_2{\(2\psi-1\)}-\E(G_2{\(2\psi-1\)}))(G_2{\(2\psi\)}-\E(G_2\({2\psi}\)))]\times\\\notag
           &\prod_{\varphi=1}^{s_3}\E[(G_1{\(2s_1+\varphi\)}-\E(G_1{\(2s_1+\varphi\)}))(G_2{\(2s_2+\varphi\)}-\E(G_2\({2s_2+\varphi}\)))].
\end{align}

Now, we compare
\begin{align}
&n^{-k_2}\sum_{G_k\in c(s_1,s_2,s_3)} \E G_k\\\notag
=&n^{-k_2}\sum_{G_k\in c(s_1,s_2,s_3)}\prod_{\tau=1}^{s_1}\E[(G_1{\(2\tau-1\)}-\E(G_1{\(2\tau-1\)}))(G_1{\(2\tau\)}-\E(G_1\({2\tau}\)))]\times\\\notag
           &\prod_{\psi=1}^{s_2}\E[(G_2{\(2\psi-1\)}-\E(G_2{\(2\psi-1\)}))(G_2{\(2\psi\)}-\E(G_2\({2\psi}\)))]\times \\\notag
           &\prod_{\varphi=1}^{s_3}\E[(G_1{\(2s_1+\varphi\)}-\E(G_1{\(2s_1+\varphi\)}))(G_2{\(2s_2+\varphi\)}-\E(G_2\({2s_2+\varphi}\)))],
           \end{align}
with
\begin{align}
&\(\E\(\mb T_1-\mu_1\)^2\)^{s_1}\(\E\(\mb T_2-\mu_2\)^2\)^{s_2}\(\E\(\mb T_1-\mu_1\)\(\mb T_2-\mu_2\)\)^{s_3}\\\notag
=&n^{-k_2}\sum_{G_k}\prod_{\tau=1}^{s_1}\E[(G_1{\(2\tau-1\)}-\E(G_1{\(2\tau-1\)}))(G_1{\(2\tau\)}-\E(G_1\({2\tau}\)))]\times\\\notag
           &\prod_{\psi=1}^{s_2}\E[(G_2{\(2\psi-1\)}-\E(G_2{\(2\psi-1\)}))(G_2{\(2\psi\)}-\E(G_2\({2\psi}\)))]\times \\\notag
           &\prod_{\varphi=1}^{s_3}\E[(G_1{\(2s_1+\varphi\)}-\E(G_1{\(2s_1+\varphi\)}))(G_2{\(2s_2+\varphi\)}-\E(G_2\({2s_2+\varphi}\)))],
\end{align}
where $\sum_{G_k\in c(s_1,s_2,s_3)}$ stands for the summation running over all graph $G_k$ satisfying the condition $c(s_1,s_2,s_3)$.

If $G_k$ satisfies the two observations mentioned before, then $\E G_k=0$, which does not appear in both expressions; if $G_k$ satisfies the condition
$c(s_1,s_2,s_3)$, then the two expressions both contain $\E G_k$. Therefore, the second expression contains more terms that $G_k$ have more connections among  subgraphs than the condition $c(s_1,s_2,s_3)$. Therefore, by Lemma \ref{lm2},
\begin{equation}
\(\E\(\mb T_1-\mu_1\)^2\)^{s_1}\(\E\(\mb T_2-\mu_2\)^2\)^{s_2}\(\E\(\mb T_1-\mu_1\)\(\mb T_2-\mu_2\)\)^{s_3}=n^{-k_2}\sum_{G_k\in c(s_1,s_2,s_3)} \E G_k+o(S^k).
\label{map1}
\end{equation}

If $G_k\in G_k'$ with $c_{G_k'}=s+k_2$, for any nonnegative integers $s_1,s_2,s_3$ satisfying $k_1=2s_1+s_3$, $k_2=2s_2+s_3$ and $s_1+s_2+s_3=s$, we have ${k_1\choose s_3}{k_2 \choose s_3}(2s_1-1)!!(2s_2-1)!!s_3!$ ways to pairing the subgraphs satisfying the condition $c(s_1,s_2,s_3)$. By (\ref{map1}), we then have
\begin{eqnarray*}
&&\sum_{c_{G_k'}=s+k_2}n^{-k_2}\E G_k+o(S^k)\\
&=&\sum_{s_1+s_2+s_3=s\atop 2s_1+s_3=k_1, 2s_2+s_3=k_2}{k_1\choose s_3}{k_2 \choose s_3}(2s_1-1)!!(2s_2-1)!!s_3!
(Var(\mb T_1))^{s_1}(Var(\mb T_2))^{s_2}(Cov(\mb T_1,\mb T_2))^{s_3}
\end{eqnarray*}


It follows that
\begin{align*}
M_k'=&S^{-k}\sum_{k_1+k_2=k}{k\choose k_1}\alpha^{k_1}\beta^{k_2}n^{-k_2}\sum_{c_{G_k'}=s+k_2}\E G_k+o(1)\\
=&\Big(S^{-2s}\sum_{k_1=0}^{2s}\sum_{s_3=0}^{\min\{k_1,k_2\}}{2s\choose k_1}{k_1 \choose s_3}{k_2 \choose s_3}\(2s_1-1\)!!\(2s_2-1\)!!s_3!\\
&\(\alpha^2Var(\mb T_1)\)^{s_1}\(\beta^2Var(\mb T_2)\)^{s_2}\(\alpha\beta Cov(\mb T_1,\mb T_2)\)^{s_3}\Big)+o(1)\\
=&\Big(S^{-2s}\sum_{s_1+s_2+s_3=s}{2s\choose 2s_1+s_3}{2s_1+s_3 \choose s_3}{2s_2+s_3 \choose s_3}\(2s_1-1\)!!\(2s_2-1\)!!s_3!\\
&\(\alpha^2Var(\mb T_1)\)^{s_1}\(\beta^2Var(\mb T_2)\)^{s_2}\(\alpha\beta Cov(\mb T_1,\mb T_2)\)^{s_3}\Big)+o(1)\\
=&\Big(S^{-2s}\sum_{s_1+s_2+s_3=s}\frac{(2s)!(2s_1+s_3)!(2s_2+s_3)!}{\(2s_1+s_3\)!(2s_2+s_3)!s_3!(2s_1)!s_3!(2s_2)!}\(2s_1-1\)!!\(2s_2-1\)!!s_3!\\
&\(\alpha^2Var(\mb T_1)\)^{s_1}\(\beta^2Var(\mb T_2)\)^{s_2}\(\alpha\beta Cov(\mb T_1,\mb T_2)\)^{s_3}\Big)+o(1)\\
=&\Big(S^{-2s}\sum_{s_1+s_2+s_3=s}(2s-1)!!\frac{s!}{s_1!s_2!s_3!}\\
&\(\alpha^2Var(\mb T_1)\)^{s_1}\(\beta^2Var(\mb T_2)\)^{s_2}\(2\alpha\beta Cov(\mb T_1,\mb T_2)\)^{s_3}\Big)+o(1),
\end{align*}
which implies that
 $$
 M'_{2s}\to (2s-1)!!.
 $$

Combining the arguments above and the moment convergence theorem we conclude that
$$\frac{\mb T_1-\E \mb T_1}{\sqrt{{\rm Var}\mb T_1}}\stackrel{d}{\rightarrow} {\rm N}\(0,1\),\ \frac{\mb T_2-\E \mb T_2}{\sqrt{{\rm Var}\mb T_2}}\stackrel{d}{\rightarrow} {\rm N}\(0,1\), \ \frac{ \(\alpha \mb T_1+\beta \mb T_2\)-\E \(\alpha \mb T_1+\beta \mb T_2\)}{\sqrt{{\rm Var}\(\alpha \mb T_1+\beta \mb T_2\)}}\stackrel{d}{\rightarrow} {\rm N}\(0,1\),$$
where $\alpha^2+\beta^2\neq 0.$
Let $$\Sigma=\left(
             \begin{array}{cc}
               {\rm {Var}}(\mb T_1) & \rm {Cov}(\mb T_1,\mb T_2) \\
               \rm {Cov}(\mb T_1,\mb T_2) & \rm {Var}(\mb T_2) \\
             \end{array}
           \right)
.$$ We conclude that $\Sigma^{-1/2}\(\mb T_1-\E\mb T_1,\mb T_2-\E\mb T_2\)'$ is asymptotic two dimensional gaussian vector.

\subsection{The expectation}\label{exp}

In the following let $\mb B=\mb A\mb A'$. Recall that

  $$\mb{T_1}=\sum_{i=1}^n\widehat{\varepsilon_i}^4=\sum_{i=1}^n\(\sum_{j=1}^n a_{i,j}\xi_j\)^4=\sum_{i=1}^n\sum_{j_1,j_2,j_3,j_4}a_{i,j_1}a_{i,j_2}a_{i,j_3}a_{i,j_4}\xi_{j_1}\xi_{j_2}\xi_{j_3}\xi_{j_4},$$ $$\mb T_2=n^{-1}\(\sum_{i=1}^n\(\sum_{j=1}^n a_{i,j}\xi_j\)^2\)^2
=n^{-1}\sum_{i_1,i_2}\sum_{j_1,j_2,j_3,j_4}a_{i_1,j_1}a_{i_1,j_2}a_{i_2,j_3}a_{i_2,j_4}\xi_{j_1}\xi_{j_2}\xi_{j_3}\xi_{j_4}.$$
Since all  odd moments of $\xi_1,\cdots,\xi_n$ are 0, we know that $\E\mb T_1$ and $\E\mb T_2$ are only affected by terms whose multiplicities of distinct values in the sequence $(j_1,\cdots,j_4)$ are all even.

We need to evaluate the mixed moment $\E \(\mb T_1^{\gamma}\mb T_2^{\omega}\)$. For simplifying notations particularly in Section \ref{var} we introduce the following notations
\begin{align}
&\Omega_{\{\omega_1,\omega_2,\cdots,\omega_s\}}^{\(\gamma_1,\gamma_2,\cdots,\gamma_t\)}[\underbrace{\(\phi_{1,1},\cdots,\phi_{1,s}\),\(\phi_{2,1},\cdots,\phi_{2,s}\),
\cdots,\(\phi_{t,1},\cdots,\phi_{t,s}\)}_{t \ groups}]_0 \\
=&\sum_{i_1,\cdots,i_{t},j_1\neq\cdots\neq j_{s}}\prod_{\tau=1,\cdots, t}\prod_{\rho=1,\cdots, s} a_{i_{\tau},j_{\rho}}^{\phi_{\tau,\rho}},
\end{align}
where $i_1,\cdots,i_{t}$ and $j_1,\cdots,j_{s}$ run over $1,\cdots,n$  and are subject to the restrictions that $j_1,\cdots,j_s$ are distinct; $\sum_{l=1}^t \gamma_l=\sum_{l=1}^s \omega_l=\theta,$ and for any $k=1,\cdots, s$, $\sum_{l=1}^t \phi_{l,k}=\theta$.  Intuitively, $t$ is the number of distinct $i$-indices and $s$ that of distinct $j$'s; $\gamma_\tau$ is the multiplicity of the index $i_\tau$ and $\omega_\rho=\sum_{l=1}^t\phi_{l,\rho}$ that of $j_\rho$;  $\phi_{\tau,\rho}$ the multiplicity of the factor $a_{i_\tau,j_\rho}$; and $\theta=4(\gamma+\omega)$.

Define
\begin{align*}
&\Omega_{\{\omega_1,\omega_2,\cdots,\omega_s\}}^{\(\gamma_1,\gamma_2,\cdots,\gamma_t\)}[\underbrace{\(\phi_{1,1},\cdots,\phi_{1,s}\),\(\phi_{2,1},\cdots,\phi_{2,s}\),
\cdots,\(\phi_{t,1},\cdots,\phi_{t,s}\)}_{t \ groups}] \\
=&\sum_{i_1,\cdots,i_{t},j_1,\cdots, j_{s}}\prod_{\tau=1,\cdots, t}\prod_{\rho=1,\cdots, s} a_{i_{\tau},j_{\rho}}^{\phi_{\tau,\rho}}.
\end{align*}
The definition above is similar to that of
$$\Omega_{\{\omega_1,\omega_2,\cdots,\omega_s\}}^{\(\gamma_1,\gamma_2,\cdots,\gamma_t\)}[\underbrace{\(\phi_{1,1},\cdots,\phi_{1,s}\),\(\phi_{2,1},\cdots,\phi_{2,s}\),
\cdots,\(\phi_{t,1},\cdots,\phi_{t,s}\)}_{t \ groups}]_0$$
 without the restriction that the indices $j_1,\cdots,j_s$ are distinct from each other. To help understand these notations we demonstrate some examples as follows.
\begin{align*}
\Omega_{\{2,2,2,2\}}^{(4,4)}[(2,2,0,0),(0,0,2,2)]=\sum_{i_1,i_2,j_1,\cdots,j_4}a_{i_1,j_1}^2a_{i_1,j_2}^2a_{i_2,j_3}^2a_{i_2,j_3}^2,
\end{align*}

\begin{align*}
\Omega_{\{2,2,2,2\}}^{(4,4)}[(2,1,1,0),(0,1,1,2)]=\sum_{i_1,i_2,j_1,\cdots,j_4}a_{i_1,j_1}^2a_{i_1,j_2}a_{i_1,j_3}a_{i_2,j_2}a_{i_2,j_3}a_{i_2,j_4}^2,
\end{align*}

\begin{align*}
\Omega_{\{2,2,2,2\}}^{(4,4)}[(2,2,0,0),(0,0,2,2)]_0=\sum_{i_1,i_2,j_1\neq\cdots\neq j_4}a_{i_1,j_1}^2a_{i_1,j_2}^2a_{i_2,j_3}^2a_{i_2,j_3}^2,
\end{align*}

\begin{align*}
\Omega_{\{2,2,2,2\}}^{(4,4)}[(2,1,1,0),(0,1,1,2)]_0=\sum_{i_1,i_2,j_1\neq\cdots\neq j_4}a_{i_1,j_1}^2a_{i_1,j_2}a_{i_1,j_3}a_{i_2,j_2}a_{i_2,j_3}a_{i_2,j_4}^2.
\end{align*}




We further use $M_k$ to denote the $k$-th order moment of the error random variable. We also use $\mb C_{n}^k$ to denote the combinatorial number $n \choose k$. We then obtain

\begin{align}\label{e1}
&\E \mb T_1=\E\sum_{i=1}^n\sum_{j_1,j_2,j_3,j_4}a_{i,j_1}a_{i,j_2}a_{i,j_3}a_{i,j_4}\xi_{j_1}\xi_{j_2}\xi_{j_3}\xi_{j_4}\\\notag
=&M_4\Omega_{\{4\}}^{(4)}+M_2^2{\Omega_{\{2,2\}}^{(4)}}_0=M_4\Omega_{\{4\}}^{(4)}+\frac{\mb C_4^2}{2!}{\Omega_{\{2,2\}}^{(4)}}[\(2,0\),\(0,2\)]_0\\\notag
=&M_4\Omega_{\{4\}}^{(4)}+\frac{\mb C_4^2}{2!}\({\Omega_{\{2,2\}}^{(4)}}[\(2,0\),\(0,2\)]-\Omega_{\{4\}}^{(4)}\)\\\notag
=&\frac{\mb C_4^2}{2!}{\Omega_{\{2,2\}}^{(4)}}[\(2,0\),\(0,2\)]+\nu_4 \Omega_{\{4\}}^{(4)}
=3\sum_i\(\sum_{j}a_{i,j}^2\)^2+\nu_4\sum_{ij}a_{ij}^4\\\notag
=&3\sum_ib_{i,i}^2+\nu_4\sum_{ij}a_{ij}^4=3\tr\(\mb B\du \mb B\)+\nu_4\tr(\mb A\du \mb A)'(\mb A\du \mb A),
\end{align}
where $\nu_4=M_4-3$
and

\begin{align}\label{e2}
&\E\mb T_2=n^{-1}\E\sum_{i_1,i_2}\sum_{j_1,j_2,j_3,j_4}a_{i_1,j_1}a_{i_1,j_2}a_{i_2,j_3}a_{i_2,j_4}\xi_{j_1}\xi_{j_2}\xi_{j_3}\xi_{j_4}\\\notag
=&n^{-1}\(M_4\Omega_{\{4\}}^{(2,2)}+M_2^2{\Omega_{\{2,2\}}^{(2,2)}}_0\)\\\notag
=&n^{-1}\(M_4\Omega_{\{4\}}^{(2,2)}+\({\Omega_{\{2,2\}}^{(2,2)}}[\(2,0\),\(0,2\)]_0+2\Omega_{\{2,2\}}^{(2,2)}[\(1,1\),\(1,1\)]_0\)\)\\\notag
=&n^{-1}\(M_4\Omega_{\{4\}}^{(2,2)}+\({\Omega_{\{2,2\}}^{(2,2)}}[\(2,0\),\(0,2\)]+2\Omega_{\{2,2\}}^{(2,2)}[\(1,1\),\(1,1\)]\)-3\Omega_{\{4\}}^{(2,2)}\)\\\notag
=&n^{-1}\(\sum_{i_1,i_2,j_1,j_2}a_{i_1,j_1}^2a_{i_2,j_2}^2+2\sum_{i_1,i_2,j_1,j_2}a_{i_1,j_1}a_{i_1,j_2}a_{i_2,j_1}a_{i_2,j_2}
+\nu_4\sum_{i_1,i_2,j}a_{i_1j}^2a_{i_2j}^2\)\\\notag
=&n^{-1}\(\(\sum_{i,j}a_{i,j}^2\)^2+2\sum_{i_1,i_2}\(\sum_{j}a_{i_1,j}a_{i_2,j}\)^2+\nu_4\sum_{i_1,i_2,j}a_{i_1j}^2a_{i_2j}^2\)\\\notag
=&n^{-1}\(\(\sum_{i,j}a_{i,j}^2\)^2+2\sum_{i_1,i_2}b_{i_1,i_2}^2+\nu_4\sum_{j=1}^nb_{jj}^2\)\\ \notag
=&n^{-1}\(\(\tr \mb B\)^2+2\tr\mb B^2+\nu_4\tr (\mb B\du \mb B)\).
\end{align}

\subsection{The variances and covariance}\label{var}

We are now in the position to calculate the variances of $\mb T_1$, $\mb T_2$ and their covariance.

First, we have

\begin{align}\label{t10}
&{\rm Var}( \mb T_1)=\E\(\sum_{i}\widehat{\varepsilon_i}^4-\E\(\sum_{i}\widehat{\varepsilon_i}^4\)\)^2\\\notag
=&\sum_{i_1,i_2,j_1,\cdots,j_8}[\E G(i_1,\mb j_1)G(i_2,\mb j_2)-\E G(i_1,\mb j_1)\E G(i_2,\mb j_2)]\\\notag
=&\Bigg(\Omega_{\{8\}}^{(4,4)}+\Omega_{\{2,6\}_0}^{(4,4)}+\Omega_{\{4,4\}_0}^{(4,4)}+\Omega_{\{2,2,4\}_0}^{(4,4)}+\Omega_{\{2,2,2,2\}_0}^{(4,4)}\Bigg),\\\notag
\end{align}
where the first term comes from the graphs in which the 8 $J$-vertices coincide together; the second term comes from the graphs
in which there are 6 $J$-vertices coincident and another two coincident and so on.

Because $G(i_1,\mb j_1)$ and $G(i_2,\mb j_2)$ have to connected each other, thus, we have
\begin{align}\label{t11}
&\Omega_{\{2,2,2,2\}_0}^{(4,4)}\\\notag=&\frac{\mb C_4^2\mb C_4^2\mb C_2^1\mb C_2^1}{2!}\Omega_{\{2,2,2,2\}}^{(4,4)}[(2,1,1,0),(0,1,1,2)]_0+{\mb C_4^1\mb C_3^1\mb C_2^1}\Omega_{\{2,2,2,2\}}^{(4,4)}[(1,1,1,1),(1,1,1,1)]_0\\\notag
=&72\Big(\Omega_{\{2,2,2,2\}}^{(4,4)}[(2,1,1,0),(0,1,1,2)]-4\Omega_{\{2,2,4\}}^{(4,4)}[(2,1,1),(0,1,3)]_0-\Omega_{\{2,2,4\}}^{(4,4)}[(2,0,2),(0,2,2)]_0\\\notag
 &-\Omega_{\{2,2,4\}}^{(4,4)}[(1,1,2),(1,1,2)]_0-2\Omega_{\{4,4\}}^{(4,4)}[(3,1),(1,3)]_0-\Omega_{\{4,4\}}^{(4,4)}[(2,2),(2,2)]_0\\\notag
 &-2\Omega_{\{2,6\}}^{(4,4)}[(1,3),(1,3)]_0-2\Omega_{\{2,6\}}^{(4,4)}[(2,2),(0,4)]_0-\Omega_{\{8\}}^{(4,4)}\Big)\\\notag
 &+24\Big(\Omega_{\{2,2,2,2\}}^{(4,4)}[(1,1,1,1),(1,1,1,1)]-6\Omega_{\{2,2,4\}}^{(4,4)}[(1,1,2),(1,1,2)]_0-3\Omega_{\{4,4\}}^{(4,4)}[(2,2),(2,2)]_0\\\notag
 &-4\Omega_{\{2,6\}}^{(4,4)}[(1,3),(1,3)]_0-\Omega_{\{8\}}^{(4,4)}\Big).\\\notag
=&72\Big(\Omega_{\{2,2,2,2\}}^{(4,4)}[(2,1,1,0),(0,1,1,2)]-4\Omega_{\{2,2,4\}}^{(4,4)}[(2,1,1),(0,1,3)]-\Omega_{\{2,2,4\}}^{(4,4)}[(2,0,2),(0,2,2)]\\\notag
 &-\Omega_{\{2,2,4\}}^{(4,4)}[(1,1,2),(1,1,2)]+2\Omega_{\{4,4\}}^{(4,4)}[(3,1),(1,3)]_0+\Omega_{\{4,4\}}^{(4,4)}[(2,2),(2,2)]_0\\\notag
 &+4\Omega_{\{2,6\}}^{(4,4)}[(1,3),(1,3)]_0+4\Omega_{\{2,6\}}^{(4,4)}[(2,2),(0,4)]_0+5\Omega_{\{8\}}^{(4,4)}\Big)\\\notag
 &+24\Big(\Omega_{\{2,2,2,2\}}^{(4,4)}[(1,1,1,1),(1,1,1,1)]-6\Omega_{\{2,2,4\}}^{(4,4)}[(1,1,2),(1,1,2)]+3\Omega_{\{4,4\}}^{(4,4)}[(2,2),(2,2)]_0\\\notag
 &+8\Omega_{\{2,6\}}^{(4,4)}[(1,3),(1,3)]_0+5\Omega_{\{8\}}^{(4,4)}\Big).\\\notag
=&72\Big(\Omega_{\{2,2,2,2\}}^{(4,4)}[(2,1,1,0),(0,1,1,2)]-4\Omega_{\{2,2,4\}}^{(4,4)}[(2,1,1),(0,1,3)]-\Omega_{\{2,2,4\}}^{(4,4)}[(2,0,2),(0,2,2)]\\\notag
 &-\Omega_{\{2,2,4\}}^{(4,4)}[(1,1,2),(1,1,2)]+2\Omega_{\{4,4\}}^{(4,4)}[(3,1),(1,3)]+\Omega_{\{4,4\}}^{(4,4)}[(2,2),(2,2)]\\\notag
 &+4\Omega_{\{2,6\}}^{(4,4)}[(1,3),(1,3)]+4\Omega_{\{2,6\}}^{(4,4)}[(2,2),(0,4)]-6\Omega_{\{8\}}^{(4,4)}\Big)\\\notag
 &+24\Big(\Omega_{\{2,2,2,2\}}^{(4,4)}[(1,1,1,1),(1,1,1,1)]-6\Omega_{\{2,2,4\}}^{(4,4)}[(1,1,2),(1,1,2)]+3\Omega_{\{4,4\}}^{(4,4)}[(2,2),(2,2)]\\\notag
 &+8\Omega_{\{2,6\}}^{(4,4)}[(1,3),(1,3)]-6\Omega_{\{8\}}^{(4,4)}\Big).
\end{align}

Likewise we have

\begin{align}\label{t12}
&{\Omega_{\{2,2,4\}}^{(4,4)}}_0\\\notag
=&{\mb C_2^1\mb C_4^3\mb C_4^1\mb C_3^1}M_4\Omega_{\{2,2,4\}}^{(4,4)}[(1,2,1),(1,0,3)]_0\\\notag
&+\frac{\mb C_4^2\mb C_4^2\mb C_2^1\mb C_2^1}{2!}M_4\Omega_{\{2,2,4\}}^{(4,4)}[(1,1,2),(1,1,2)]_0+{\mb C_4^2\mb C_4^2}(M_4-1)\Omega_{\{2,2,4\}}^{(4,4)}[(2,0,2),(0,2,2)]_0\\\notag
=&96M_4\Omega_{\{2,2,4\}}^{(4,4)}[(1,2,1),(1,0,3)]_0\\\notag
 &+72M_4\Omega_{\{2,2,4\}}^{(4,4)}[(1,1,2),(1,1,2)]_0+36(M_4-1)\Omega_{\{2,2,4\}}^{(4,4)}[(2,0,2),(0,2,2)]_0,\\\ \notag
=&96M_4\Omega_{\{2,2,4\}}^{(4,4)}[(1,2,1),(1,0,3)]\\\notag
 &+72M_4\Omega_{\{2,2,4\}}^{(4,4)}[(1,1,2),(1,1,2)]+36(M_4-1)\Omega_{\{2,2,4\}}^{(4,4)}[(2,0,2),(0,2,2)]\\\ \notag
 &-96M_4\Omega_{\{4,4\}}^{(4,4)}[(3,1),(1,3)]_0-(108 M_4-36)\Omega_{\{4,4\}}^{(4,4)}[(2,2),(2,2)]_0\\\notag
 &-240M_4\Omega_{\{2,6\}}^{(4,4)}[(1,3),(1,3)]_0-(168M_4-72)\Omega_{\{2,6\}}^{(4,4)}[(2,2),(0,4)]_0-(204M_4-36)\Omega_{\{8\}}^{(4,4)}\\\notag
 =&96M_4\Omega_{\{2,2,4\}}^{(4,4)}[(1,2,1),(1,0,3)]\\\notag
 &+72M_4\Omega_{\{2,2,4\}}^{(4,4)}[(1,1,2),(1,1,2)]+36(M_4-1)\Omega_{\{2,2,4\}}^{(4,4)}[(2,0,2),(0,2,2)]\\\ \notag
 &-96M_4\Omega_{\{4,4\}}^{(4,4)}[(3,1),(1,3)]-(108 M_4-36)\Omega_{\{4,4\}}^{(4,4)}[(2,2),(2,2)]\\\notag
 &-240M_4\Omega_{\{2,6\}}^{(4,4)}[(1,3),(1,3)]-(168M_4-72)\Omega_{\{2,6\}}^{(4,4)}[(2,2),(0,4)]+(408M_4-72)\Omega_{\{8\}}^{(4,4)}
\end{align}

\begin{align}\label{t13}
\Omega_{\{4,4\}_0}^{(4,4)}=&{\mb C_2^1\mb C_4^1\mb C_4^3}M_4^2\Omega_{\{4,4\}}^{(4,4)}[(3,1),(1,3)]_0
+\frac{\mb C_4^2\mb C_4^2}{2!}(M_4^2-1)\Omega_{\{4,4\}}^{(4,4)}[(2,2),(2,2)]_0\\\notag
=&16M_4^2\Omega_{\{4,4\}}^{(4,4)}[(3,1),(1,3)]+18(M_4^2-1)\Omega_{\{4,4\}}^{(4,4)}[(2,2),(2,2)]\\\notag
&-(34M_4^2-18)\Omega_{\{8\}}^{(4,4)},
\end{align}

\begin{align}\label{t14}
\Omega_{\{2,6\}_0}^{(4,4)}=&\mb C_2^1\mb C_4^2(M_6-M_4)\Omega_{\{2,6\}}^{(4,4)}[(2,2),(0,4)]_0+\mb C_4^1\mb C_4^1M_6\Omega_{\{2,6\}}^{(4,4)}[(1,3),(1,3)]_0\\\notag
=&12(M_6-M_4)\Omega_{\{2,6\}}^{(4,4)}[(2,2),(0,4)]+16M_6\Omega_{\{2,6\}}^{(4,4)}[(1,3),(1,3)]\\\notag
& -(28 M_6-12 M_4)\Omega_{\{8\}}^{(4,4)}.
\end{align}
 and
\begin{align}\label{t15}
\Omega_{\{8\}_0}^{(4,4)}=&(M_8-M_4^2)\Omega_{\{8\}}^{(4,4)}[(4),(4)].
\end{align}

Combining (\ref{t10}), (\ref{t11}), (\ref{t12}), (\ref{t13}), (\ref{t14}) and (\ref{t15}), we obtain

\begin{align}\label{vt1}
&{\rm Var}( \mb T_1)=72\Omega_{\{2,2,2,2\}}^{(4,4)}[(2,1,1,0),(0,1,1,2)]+24\Omega_{\{2,2,2,2\}}^{(4,4)}[(1,1,1,1),(1,1,1,1)]\\\notag
&+96(M_4-3)\Omega_{\{2,2,4\}}^{(4,4)}[(2,1,1),(0,1,3)]
+36(M_4-3)\Omega_{\{2,2,4\}}^{(4,4)}[(2,0,2),(0,2,2)]\\\notag
&+72(M_4-3)\Omega_{\{2,2,4\}}^{(4,4)}[(1,1,2),(1,1,2)]+16(M_4^2-6M_4+9)\Omega_{\{4,4\}}^{(4,4)}[(3,1),(1,3)]\\\notag
&+18(M_4^2-6M_4+9)\Omega_{\{4,4\}}^{(4,4)}[(2,2),(2,2)]+16(M_6-15M_4+30)\Omega_{\{2,6\}}^{(4,4)}[(1,3),(1,3)]\\\notag
&+12(M_6-15M_4+30)\Omega_{\{2,6\}}^{(4,4)}[(2,2),(0,4)]
+(M_8-28M_6-35M_4^2+420M_4-630)\Omega_{\{8\}}^{(4,4)}[(4),(4)],
\end{align}

where

\begin{align}
&\Omega_{\{2,2,2,2\}}^{(4,4)}[(2,1,1,0),(0,1,1,2)]=\sum_{i_1,\cdots,i_2,j_1,\cdots, j_4}a_{i_1,j_1}^2a_{i_1,j_2}a_{i_1,j_3}a_{i_2,j_2}a_{i_2,j_3}a_{i_2,j_4}^2\\\notag
&=\di'(\mb B) \(\mb B\du\mb B\)\di (\mb B),
\end{align}

\begin{align}
&\Omega_{\{2,2,2,2\}}^{(4,4)}[(1,1,1,1),(1,1,1,1)]=\sum_{i_1,\cdots,i_2,j_1,\cdots, j_4}a_{i_1,j_1}a_{i_1,j_2}a_{i_1,j_3}a_{i_1,j_4}a_{i_2,j_1}a_{i_2,j_2}a_{i_2,j_3}a_{i_2,j_4}\\\notag
&=\tr\(\mb B\du\mb B\)^2,
\end{align}

\begin{align}
&\Omega_{\{2,2,4\}}^{(4,4)}[(2,1,1),(0,1,3)]=\sum_{i_1,\cdots,i_2,j_1,\cdots, j_4}a_{i_1,j_1}^2a_{i_1,j_2}a_{i_1,j_3}a_{i_2,j_2}a_{i_2,j_3}^3=\tr\mb B \mb {D_B} \mb A \mb A'^{\du 3},
\end{align}

\begin{align}
&\Omega_{\{2,2,4\}}^{(4,4)}[(2,0,2),(0,2,2)]=\sum_{i_1,\cdots,i_2,j_1,\cdots, j_4}a_{i_1,j_1}^2a_{i_1,j_3}^2a_{i_2,j_2}^2a_{i_2,j_3}^2\\\notag
&=\di'(\mb B) \(\mb A\du\mb A\)\(\mb A\du\mb A\)'\di (\mb B) ,
\end{align}

\begin{align}
&\Omega_{\{2,2,4\}}^{(4,4)}[(1,1,2),(1,1,2)]=\sum_{i_1,\cdots,i_2,j_1,\cdots, j_4}a_{i_1,j_1}a_{i_1,j_2}a_{i_1,j_3}^2a_{i_2,j_1}a_{i_2,j_2}a_{i_2,j_3}^2\\\notag
&=\tr\(\(\mb B\du \mb B\)\(\mb A\du \mb A\)\(\mb A\du \mb A\)'\) ,
\end{align}

\begin{align}
&\Omega_{\{4,4\}}^{(4,4)}[(3,1),(1,3)]=\sum_{i_1,\cdots,i_2,j_1,\cdots, j_4}a_{i_1,j_1}^3a_{i_1,j_2}a_{i_2,j_1}a_{i_2,j_2}^3=\tr\(\(\mb A^{\du 3}\mb A'\)\(\mb A^{\du 3}\mb A'\)'\) ,
\end{align}

\begin{align}
&\Omega_{\{4,4\}}^{(4,4)}[(2,2),(2,2)]=\sum_{i_1,\cdots,i_2,j_1,\cdots, j_4}a_{i_1,j_1}^2a_{i_1,j_2}^2a_{i_2,j_1}^2a_{i_2,j_2}^2=\tr\(\(\mb A \du \mb A\)\(\mb A \du \mb A\)'\)^2 ,
\end{align}

\begin{align}
&\Omega_{\{2,6\}}^{(4,4)}[(1,3),(1,3)]=\sum_{i_1,\cdots,i_2,j_1,\cdots, j_4}a_{i_1,j_1}a_{i_1,j_2}^3a_{i_2,j_1}a_{i_2,j_2}^3=\tr\(\mb B \mb A^{\du 3} \mb A'^{\du 3}\) ,
\end{align}

\begin{align}
&\Omega_{\{2,6\}}^{(4,4)}[(2,2),(0,4)]=\sum_{i_1,\cdots,i_2,j_1,\cdots, j_4}a_{i_1,j_1}^2a_{i_1,j_2}^2a_{i_2,j_2}^4=\tr\(\(\mb A' \mb D_{\mb B}\mb A \)\du \(\mb A'^{\du 2}\mb A^{\du 2}\)\),
\end{align}

and

\begin{align}
&\Omega_{\{8\}}^{(4,4)}[(4),(4)]=\sum_{i_1,\cdots,i_2,j_1,\cdots, j_4}a_{i_1,j_1}^42a_{i_2,j_1}^4=\mb 1'\mb A^{\du 4}\mb A'^{\du 4}\mb 1,
\end{align}
%
%
%
%
%
%
Using the same procedure, we have

\begin{align}
&{\rm Var}(\mb T_2)=n^{-2}\(\E\(\sum_{i}\widehat{\varepsilon_i}^2\)^4-\E^2\(\sum_{i}\widehat{\varepsilon_i}^2\)^2\)\\\notag
=&n^{-2}\sum_{i_1,\cdots,i_4,j_1,\cdots,j_8}a_{i_1,j_1}a_{i_1,j_2}a_{i_2,j_3}a_{i_2,j_4}a_{i_3,j_5}a_{i_3,j_6}a_{i_4,j_7}a_{i_4,j_8}
\(\E\prod_{t=1}^8\xi_{j_t}-\E\prod_{t=1}^4\xi_{j_t}\E\prod_{t=5}^8\xi_{j_t}\)\\\notag
=&n^{-2}(P_{2,1}+P_{2,2})+O(1),
\end{align}
where

%

\begin{align}
P_{2,1}= \mb C_2^1\mb C_2^1\mb C_2^1\sum_{i_1,\cdots,i_4,j_1,\cdots, j_4}a^2_{i_1,j_1}a_{i_2,j_2}a_{i_3,j_2}a_{i_2,j_3}a_{i_3,j_3}a_{i_4,j_4}^2
=8\(\tr\mb B\)^2\tr\mb B^2,
\end{align}

\begin{align}
P_{2,2}=\nu_4\mb C_2^1\mb C_2^1\sum_{i_1,\cdots,i_4,j_1,j_2, j_3}a^2_{i_1,j_1}a^2_{i_2,j_2}a^2_{i_3,j_2}a_{i_4,j_3}^2
=4\nu_4\tr(\mb B'\du\mb B')\(\tr\mb B\)^2,
\end{align}

%
%
%
Similarly, we have

\begin{align}
&{\rm Cov}(\mb T_1,\mb T_2)=n^{-1}\(\E\(\sum_{i}\widehat{\varepsilon_i}^2\)^2\sum_{i}\widehat{\varepsilon_i}^4-\E\(\sum_{i}\widehat{\varepsilon_i}^2\)^2\sum_{i}\widehat{\varepsilon_i}^4\)\\\notag
=&n^{-1}\sum_{i_1,\cdots,i_3,j_1,\cdots,j_8}a_{i_1,j_1}a_{i_1,j_2}a_{i_2,j_3}a_{i_2,j_4}a_{i_3,j_5}a_{i_3,j_6}a_{i_3,j_7}a_{i_3,j_8}
\(\E\prod_{t=1}^8\xi_{j_t}-\E\prod_{t=1}^4\xi_{j_t}\E\prod_{t=5}^8\xi_{j_t}\)\\
=&n^{-1}(P_{3.1}+P_{3,2}+P_{3,3}+P_{3,4})+O(1),
\end{align}

where

%


\begin{align}
P_{3,1}=\mb C_4^2\mb C_2^1\mb C_2^1\sum_{i_1,\cdots,i_3,j_1,\cdots, j_4}a_{i_1,j_1}^2a_{i_2,j_2}a_{i_2,j_3}a_{i_3,j_2}a_{i_3,j_3}a_{i_3,j_4}^2
=24\tr\(\mb B^2\du \mb B\)\tr\mb B,
\end{align}

\begin{align}
P_{3,2}=\nu_4\mb C_4^1\mb C_2^1\mb C_2^1\sum_{i_1,\cdots,i_3,j_1,\cdots, j_3}a_{i_1,j_1}^2a_{i_2,j_2}a_{i_3,j_2}a_{i_2,j_3}a_{i_3,j_3}^3
=16\nu_4\tr(\mb B\mb A \mb A'^{\du 3})\tr\mb B,
\end{align}

\begin{align}
P_{3,3}=\nu_4\mb C_4^2\mb C_2^1\sum_{i_1,\cdots,i_3,j_1,\cdots, j_3}a_{i_1,j_1}^2a^2_{i_2,j_2}a^2_{i_3,j_2}a_{i_3,j_3}^2
=12\nu_4\tr\(\(\mb A'\mb D_{\mb B}\mb A\)\du \(\mb A'\mb A\)\)\tr\mb B,
\end{align}
\begin{align}\label{ct12}
P_{3,3}=\nu_6\mb C_2^1\sum_{i_1,i_2,i_3,j_1, j_2}a_{i_1,j_1}^2a^2_{i_2,j_2}a^4_{i_3,j_2}
=2\nu_6[\di(\mb A'\mb A)'(\mb A'^{\du 4})\mb 1]\tr\mb B,
\end{align}

%
%

We would like to point out that we do not need the assumption that $H_0$ holds up to now. From now on, in order to simplify the above formulas we assume $H_0$ holds.
%
%
%

Summarizing the calculations above, we obtain under $H_0$

\begin{align}
\E \mb T_1=3\sum_ib_{i,i}^2+\nu_4\sum_{ij}a_{ij}^4=3\tr\(\mb P\du \mb P\)+\nu_4\tr(\mb P\du \mb P)^2,
\end{align}

\begin{align}\label{e2}
\E\mb T_2=n^{-1}\(\(n-p\)^2+2\(n-p\)+\nu_4\tr (\mb P\du \mb P)\),
\end{align}

\begin{align}
{\rm Var}\mb T_1=&72\di'(\mb P) \(\mb P\du\mb P\)\di (\mb P)+24\tr\(\mb P\du\mb P\)^2\\\notag
&+\nu_4\(96\tr\mb P \mb {D_P} \mb P \mb P^{\du 3}+72\tr(\mb P\du \mb P)^3+36\di'(\mb P) \(\mb P\du\mb P\)^2\di (\mb P) \)\\\notag
&+\nu^2_4\( 18\tr(\mb P\du\mb P)^4+16\tr (\mb P^{\du 3}\mb P)^2)\)\\\notag
&+\nu_6\(12\tr\(\(\mb P\mb D_{\mb P}\mb P \)\du\(\mb P^{\du 2}\mb P^{\du 2}\)\)+16\tr \mb P \mb P^{\du 3}\mb P^{\du 3}\)+\nu_8\mb 1'(\mb P^{\du 4}\mb P^{\du 4})\mb 1,
\end{align}
\begin{align}
{\rm Var}(\mb T_2)=\frac{8\(n-p\)^3+4\nu_4\(n-p\)^2\tr(\mb P\du\mb P)}{n^2}+O(1),
\end{align}
and
\begin{align}
&{\rm Cov}(\mb T_1,\mb T_2)\\\notag
=&\frac{\(n-p\)}{n}\(24\tr\(\mb P\du \mb P\)+16\nu_4\tr(\mb P \mb P^{\du 3})+12\nu_4\tr\(\(\mb P\mb D_{\mb p}\mb P\)\du \mb P\)
+2\nu_6[\di(\mb P)'(\mb P^{\du 4})\mb 1]\).
\end{align}

\subsection{The proof of the main theorem}
Define a function $f(x,y)=\frac{x}{y}-1$. One may verify that $f_x(x,y)=\frac{1}{y},$ $f_y(x,y)=-\frac{x}{y^2}$, where $f_x(x,y)$ and $f_y(x,y)$ are the first order partial derivative. Since $\mb T=\frac{\mb T_1}{\mb T_2}-1,$ using the delta method, we have under $H_0$, $$\E \mb T=f(\E \mb T_1,\E\mb T_2)=\(\frac{3n\tr\(\mb P\du\mb P\)}{(n-p)^2+2\(n-p\)}-1\),$$
\begin{align}
  {\rm {Var}} \mb T=(f_x(\E\mb T_1,\E\mb T_2),f_y(\E\mb T_1,\E\mb T_2))\Sigma(f_x(\E\mb T_1,\E\mb T_2),f_y(\E\mb T_1,\E\mb T_2))'.
\end{align}
The proof of the main theorem is complete.


\end{document}